\newtheorem{thm}{Theorem}
\newtheorem{prop}{Proposition}
\newtheorem{remm}{Remark}
\newtheorem{Lem}{Lemma}
\newtheorem{Cor}{Corollary}
\newcommand{\cov}{{\rm Cov}}
\newcommand{\var}{{\rm Var}}
\def\cum{\mathrm{cum}}
\def\E{\mathbb{E}}
\def\N{\mathbb{N}}
\def\R{\mathbb{R}}
\title{Inference for continuous-time long memory randomly sampled processes}
\author{ Mohamedou Ould Haye$^1$, Anne Philippe$^2$\footnote{corresponding author : {anne.philippe@univ-nantes.fr}} and
 Caroline Robet$^2$
}
\date{$^1$\small School of Mathematics and Statistics. \\
Carleton University, 1125 Colonel By Dr. Ottawa, ON, Canada, K1S 5B6
\\ $^2$ Laboratoire de Math\'{e}matiques Jean Leray,\\
2 rue de la Houssiniere, Universit\'{e} de Nantes, 44 322 Nantes France.}
\begin{document}
\maketitle
\begin{abstract} 
From a continuous-time long memory stochastic process, a discrete-time randomly sampled one is drawn using a renewal sampling process. We establish the existence of the spectral density of the sampled process, and we give its expression in terms of that of the initial process. 
We also investigate different aspects of the statistical inference on the sampled process. In particular, we obtain asymptotic results for the periodogram, the local Whittle estimator of the memory parameter and the long run variance of partial sums. We mainly focus on Gaussian continuous-time process. The challenge being that the randomly sampled process will no longer be jointly Gaussian.
\end{abstract}
\textbf{Keywords :} Long memory, sampled process, Whittle estimator, periodogram, spectral density, limit theorems, Poisson process, continuous-time Gaussian processes.

\section{Introduction}
Irregularly observed time series occur in many fields such as astronomy, finance, environmental, and biomedical sciences. 
Discretization of a continuous time process can produce unevenly time series. For example, physiological signals such as electromyography (EMG), electrocardiogram (ECG), as well as heartbeats (see e.g. \cite{bardet}) are measured at non regularly spaced times. In finance, market prices are tick-by-tick data; a tick being happening randomly, depending for instance on transaction prices. Such data constitute an other example of irregularly spaced time series (see e.g. \cite{Dacorogna}).

Irregular sampling interval is also used whenever one has some uncertainty surounding actual dates such as paleoclimatic time series (see e.g. \cite{Thom}), temperature and CO2 measurements data studied by \cite{Nieto}. In these instances, we do not control the way data are observed, as they are recorded at irregular time points. A common approach consists in fitting a continuous time process to discrete data (see for instance \cite{jones19855}).

Statistical tools available to handle unevenly time series are essentially developed for short range dependence (see e.g. \cite{Li} and references therein). We can also refer to numerous papers in astronomy, that focus on spectrum estimation (see e.g. \cite{Thiebaut}).

To the best of our knowledge, few results are available when the continuous-time embedding process has a long memory. Actually, long memory statistical inference for continuous-time models is generally
built upon a deterministically sampled process (see \cite{tsai1,tsai2,chambers,comte1}).
However, as in the examples previously cited, in several applied contexts one has to deal with random sampling from a continuous process. \cite{Phi2} studied randomly-spaced observations, using a renewal process as a sampling tool. They showed that the intensity of the long memory is preserved when the distribution of sampling intervals has a finite moment, but there are also situations where a reduction of the long memory is observed. Consequently, the continuous time memory parameter cannot be estimated without a prior information on the sampling process. \cite{bardet} studied spectral density estimation of continuous-time Gaussian processes with stationary increments observed at random times. \\
In this paper, we describe the spectral properties of the resulting discrete-time-indexed randomly sampled process and we provide more explicit expressions for the spectral density of the sampled process. 
We mention that \cite{Phi1} addressed resampling from a discrete-time process and obtained the existence of the spectral density. However, their spectral density expression is less explicit since it is expressed as a non explicit limit of an integral.

Most of existing long memory inferential techniques assume that
the process is a subordinated Gaussian/linear one. \cite{Phi2} established a rather surprising characteristic consisting in the loss of the joint-Gaussianity of the sampled process when the original process was Gaussian. Therefore we cannot apply such results to our  sampled processes that are neither Gaussian nor strongly linear. We study some aspects of the inference via spectral approaches. In particular, to establish the consistency of long memory parameter's local Whittle estimator using \cite{dalla}'s assumptions for nonlinear long memory processes.\\\\
We now describe our sampling model. We start with $X=(X_t)_{t\in \R ^+}$, a continuous time process and a renewal process $(T_n)_{n\geq 0}$. 
We study the discrete-time indexed process $Y=(Y_n)_{n\ge 1}$ defined by
\begin{equation}\label{def}
Y_n=X_{T_n}\quad n=1,2,\ldots.
\end{equation}
We want to emphasise that the sampling process $T_n$ is not observed. Throughout this paper, we will assume that, and refer to
\begin{itemize}
 \item[$H_X$ : ] $X$ is second-order stationary continuous time process with auto-covariance function $\sigma_X$ and having a spectral density $f_X$: for all $t\in \R$
 \begin{equation}\label{fX} \sigma_X(t) = \int_{-\infty}^\infty e^{i\lambda t}f_X(\lambda)d\lambda.\end{equation} 
 \item[$H_T$ : ] $(T_n)_{n\geq 0}$ independent of $X$ and of i.i.d. increments $T_{j+1}-T_j=\Delta_j\geq 0$ non degenerate with cumulative distribution function $S$ and we let $T_0=0$.
\end{itemize}
 We impose this specific 
 initialization $T_0=0$ only to simplify our notations since it implies
 that $\Delta_j = T_{j+1}-T_j$ for all $j\in\N$. However, all the results remain true if we take $T_0 = \Delta_0 $ and
 $\Delta_j = T_{j}-T_{j-1}$, for $j\geq 1$.

 The rest of the paper is organized as follows. Section \ref{spectral_density} presents results on the existence of a spectral density for the process $Y$ when the spectrum of $X$ is absolutely continuous. We also provide an integral representation of such density. In Section \ref{sec:perio}, we establish the asymptotic distribution of the normalized periodogram of the sampled process. In Section \ref{sec:inf}, we show the consistency of $Y$-based local Whittle memory estimator. We also study the estimation of the so-called long-run variance.

 \section{Spectral density function of sampled process}\label{spectral_density}
 
Under the assumptions $ H_X$ and $H_T$, \cite{Phi2} show that if $X$ is stationary then so is $Y$. Moreover, its covariance function is of the following form 
\begin{eqnarray}\label{gammak2}
\sigma_Y(j)=\textrm{Cov}(Y_1,Y_{j+1})=\mathbb{E}(\sigma_X({T_j})).
\end{eqnarray}
Note that the independence of $X$ and the renewal process imposed in $H_T$, is required to get \eqref{gammak2}.
In the next proposition, we prove that the existence of the spectral density is preserved
by random sampling and we establish the link
between the spectral densities of processes $X$ and $Y$.
\begin{prop}

\label{sigma_Y} Assume that the continuous-time process $X$ satisfies $H_X$ and that $H_T$ holds. Then, the discrete-time process $Y$ admits a spectral density and it is given by the following formula
\begin{equation}\label{limitesigma}
f_Y(x)=\frac{1}{2\pi}\int_{-\infty}^\infty p(x,\Psi_S(\lambda))f_X(\lambda)d\lambda,
\end{equation}
where $\Psi_S$ is the characteristic function of the cumulative distribution function $S$ defined in $H_T$ and
$$
p(x,z)=\frac{1-\vert z\vert^2}{\vert 1-e^{-ix}z\vert ^2},\qquad \vert z\vert<1
$$
is the well known Poisson kernel.
\end{prop}
\begin{proof} According to the stationarity property and \eqref{gammak2} proved in \cite{Phi2} and the existence of the spectral density $f_X$ in \eqref{fX}, the covariance function of $Y$ can be computed via Fubini's theorem as follows:
\begin{eqnarray}\label{gammak}\lefteqn{
\sigma_Y(j)=\mathbb{E}(\sigma_X({T_j}))=\mathbb{E}\left(\int_{-\infty}^\infty e^{i\lambda T_j}f_X(\lambda)d\lambda\right)}\nonumber\\
&&=\int_{-\infty}^\infty\left(\mathbb{E}\left(e^{i\lambda T_j}\right)\right)f_X(\lambda)d\lambda=\int_{-\infty}^\infty\left(\Psi_S(\lambda)\right)^jf_X(\lambda)d\lambda.
\end{eqnarray}
To prove (\ref{limitesigma}), it will suffice to show that for every $j\ge0$,
\begin{equation}\label{cov00}
\sigma_Y(j)=\int_{-\pi}^\pi e^{ijx}f_Y(x)dx,
\end{equation}
as $f_Y$ defined by (\ref{limitesigma}) is clearly an even function.
For this, we will use the following Poisson integral formula for the disk: if $u$ is an analytic function on the disk $\vert z\vert<1$ and continuous on $\vert z\vert=1$ then its real and imaginary parts are harmonic and therefore for $\vert z\vert<1$, we have
$$
u(z)=\frac{1}{2\pi}\int_{-\pi}^\pi u(e^{ix})p(x,z)dx.
$$
Applying the above with $u(z)=z^j$, where $j$ is a fixed nonnegative integer, we get
\begin{equation}\label{zj}
 z^j=\frac{1}{2\pi}\int_{-\pi}^\pi e^{ijx}p(x,z)dx,\quad\textrm{for all }\vert z\vert<1,
\end{equation}
 and since for Lebesgue a.e. $\lambda$, $\vert\Psi_S(\lambda)\vert<1$ ($S$ being non degenerated), then for a.e. $\lambda$,
\begin{equation}\label{jj}
(\Psi_S(\lambda))^j=\frac{1}{2\pi}\int_{-\pi}^\pi e^{ijx}p(x,\Psi_S(\lambda))dx.
\end{equation}
Also taking $j=0$ in \eqref{zj}, we get 
$$
\frac{1}{2\pi}\int_{-\pi}^\pi p(x,z)dx=1\quad\textrm{for all }\vert z\vert<1. 
$$
Hence, by Fubini's theorem, we see that $f_Y$, as given in \eqref{limitesigma}, is integrable on $[-\pi, \pi]$. Applying Fubini's theorem once again and 
substituting (\ref{jj}) in (\ref{gammak}), we immediately get (\ref{cov00}).
\end{proof}

The following corollary gives a precise expression of the spectral density of $Y$ in the most common case of Poisson renewal process.
\begin{Cor}
 Assume that the continuous-time process $X$ satisfies $H_X$ and that $(T_n)$ is a Poisson renewal process with rate 1, independent of $X$. If $\lambda^2f_X(\lambda)$ is bounded and continuous on the real line then 
\begin{equation}\label{poi1}
f_Y(x)=\frac{u(\sin x,1-\cos x)}{2(1-\cos x)},\qquad\textrm{for a.e. }x\textrm{ in the interval }(-\pi,\pi),
\end{equation}
where $u(x,y)$ is the harmonic function on the upper half plane with boundary condition $u(x,0)=x^2f_X(x)$. In particular, both spectral densities are equivalent near zero, i.e., $f_Y(x)\sim f_X(x)$ as $x\to0$.
\end{Cor}
\begin{proof}
The exponential distribution has characteristic function $(1-i\lambda)^{-1}$ and hence from Proposition \ref{sigma_Y}, we can easily derive that 
\begin{eqnarray}\lefteqn{
f_Y(x)=\frac{1}{2\pi}\int_{-\infty}^\infty\left(\frac{\lambda^2}{(\lambda-\sin x)^2+(1-\cos x)^2}\right)f_X(\lambda)d\lambda} \label{fyPois1}\\
&&=
\frac{1}{2(1-\cos x)}\frac{1}{\pi}\int_{-\infty}^\infty\left(\frac{1-\cos x}{(\lambda-\sin x)^2+(1-\cos x)^2}\right)\lambda^2f_X(\lambda)d\lambda. \label{fyPois}
\end{eqnarray}
In the above we recognise the well known Poisson integral formula for the upper half plane for the function: $x\mapsto x^2f_X(x)$:
if $g$ is continuous and bounded on the real line then the function defined by
\begin{equation}\label{u}
u(x,y):=\frac{1}{\pi}\int_{-\infty}^\infty\left(\frac{y}{(x-\lambda)^2+y^2}\right)g(\lambda)d\lambda
\end{equation}
is harmonic on the upper half plane and satisfies $u(x,0)=g(x)$ (see for example the result 7.3 on page 147 of \cite{Axler}) and $\frac{u(x,y)}{g(x)} \to 1$ uniformly in $x$ as $y\to0$. Combining \eqref{fyPois} and \eqref{u} we get the stated result. 
\end{proof}\noindent
The next proposition precises the behaviour of the spectral density of sampled process $Y$ near zero, given in the previous corollary, under mild semi parametric conditions on the spectral density of the original process $X$.
 \begin{prop}\label{spectral-lemma}
 Assume that $T_n$ is a Poisson process independent of $X$ with rate 1 and that $X$ satisfies $H_X$ with spectral density of the form
 \begin{equation}\label{eq:lmspe}
 f_X(\lambda)=\vert\lambda\vert^{-2d}\phi(\lambda),
 \end{equation} with $0<d<1/2$,
 $\phi(0)\neq0$ and $\phi$ is continuous on $[-1,1]$ and differentiable on $(-1 , 1 )$. Then
 \begin{equation}\label{spectral-form}
 f_Y(x)=|x|^{-2d}f_Y^*(x)
 \end{equation}
 with $f_Y^*$ is positive continuous on $[-\pi,\pi]$ and
 \begin{equation}\label{dala}
 f_Y^*(x)=\phi(0)+\frac{\sigma_X(0)}{2\pi}|x|^{2d}+o(|x|^{2d}),\qquad\textrm{as }x\to0.
 \end{equation}
 \end{prop}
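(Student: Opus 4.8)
The plan is to specialise Lemma~\ref{sigma_Y} to the Poisson case and then run a concentration (approximate-identity) analysis of the resulting integral. Since the inter-arrivals are $\mathrm{Exp}(1)$, $\widehat S(\lambda)=\E e^{i\lambda\Delta_1}=(1-i\lambda)^{-1}$, so $|\widehat S(\lambda)|^2=(1+\lambda^2)^{-1}$ and, computing $|1-e^{ix}\widehat S(\lambda)|^2=|(1-\cos x)-i(\lambda+\sin x)|^2/(1+\lambda^2)$, formula (\ref{limitesigma}) collapses to
\[
f_Y(x)=\frac{1}{2\pi}\int_{-\infty}^\infty\frac{\lambda^2 f_X(\lambda)}{D(\lambda,x)}\,d\lambda,\qquad D(\lambda,x)=(\lambda+\sin x)^2+(1-\cos x)^2 .
\]
The single most useful fact is this closed form of the denominator: for $0<|x|\le\pi$ one has $D>0$, with minimum $(1-\cos x)^2$ attained at $\lambda=-\sin x$. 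Positivity of $f_Y$ — hence of $f_Y^*:=x^{2d}f_Y$, which makes (\ref{spectral-form}) hold by definition — is then immediate, and continuity of $f_Y$ on $(0,\pi]$ follows by dominated convergence once one checks that $\lambda^2/D(\lambda,x)$ is uniformly bounded on $[\eta,\pi]\times\R$ for each $\eta>0$; continuity and positivity at $0$ will fall out of (\ref{dala}).

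For the expansion I would first extract the constant exactly. Writing $\lambda^2/D=1-\big(2\lambda\sin x+2(1-\cos x)\big)/D$ and using $\int f_X=\sigma_X(0)$ gives
\[
2\pi f_Y(x)=\sigma_X(0)-I(x),\qquad I(x)=\int_{-\infty}^\infty\frac{\big[\,2\lambda\sin x+2(1-\cos x)\,\big]f_X(\lambda)}{D(\lambda,x)}\,d\lambda .
\]
Thus $f_Y^*(x)=\frac{\sigma_X(0)}{2\pi}x^{2d}-\frac{x^{2d}}{2\pi}I(x)$, the term $\frac{\sigma_X(0)}{2\pi}x^{2d}$ is already accounted for, and (\ref{dala}) reduces to the single estimate $x^{2d}I(x)=-2\pi\phi(0)+o(x^{2d})$.

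The key device for $I(x)$ is the substitution $\lambda=-\sin x+(1-\cos x)t$, which recentres at the minimiser of $D$ and rescales by the width $\varepsilon:=1-\cos x$; since $D=\varepsilon^2(1+t^2)$ the kernel becomes the Cauchy kernel and, after simplifying the numerator,
\[
I(x)=2\int_{-\infty}^\infty\frac{-(1-\varepsilon)+t\sin x}{1+t^2}\,f_X\!\big(-\sin x+\varepsilon t\big)\,dt .
\]
Factoring $f_X(-\sin x+\varepsilon t)=|\sin x|^{-2d}\,|1-\tau t|^{-2d}\,\phi(-\sin x+\varepsilon t)$ with $\tau:=\tan(x/2)=\varepsilon/\sin x\to0$, the integrand converges pointwise to $-\phi(0)/(1+t^2)$; together with $\int(1+t^2)^{-1}dt=\pi$ and $(x/\sin x)^{2d}\to1$ this identifies the limit $x^{2d}I(x)\to-2\pi\phi(0)$.

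The main obstacle is upgrading this limit to the error $o(x^{2d})$, which is genuinely a multi-scale estimate: the kernel concentrates at $\lambda\approx-x$ on the scale $\varepsilon\approx x^2/2$, whereas $|\lambda|^{-2d}$ varies on scale $x$ and $\phi$ on scale $1$. I would split $I(x)$ into (i) the tail $|\lambda|\ge\delta$, bounded by $C\sin x\int_{|\lambda|\ge\delta}|\lambda|^{-1}f_X\,d\lambda=O(x)$ using only $f_X\in L^1$; (ii) the core contribution of $\phi(\lambda)-\phi(0)$, which is $O(x^{1-2d})$ via the local Lipschitz bound $|\phi(\lambda)-\phi(0)|\le C|\lambda|$ coming from differentiability of $\phi$; and (iii) the leading core integral with $\phi$ frozen at $\phi(0)$, where the deviations of $(x/\sin x)^{2d}=1+O(x^2)$, of $|1-\tau t|^{-2d}$ from $1$, and of the odd term $t\sin x$ all contribute $O(x)$ after the further change of variable $s=\tau t$ turns the relevant integrals into Poisson-kernel averages $\int \tau(\tau^2+s^2)^{-1}(\cdots)\,ds$. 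Each error is $O(x)$ or $O(x^{1-2d})$, and both are $o(x^{2d})$ precisely because $0<d<\tfrac12$ (which is forced by $\sigma_X(0)=\int f_X<\infty$); assembling them yields $x^{2d}I(x)=-2\pi\phi(0)+O(x)$ and hence (\ref{dala}). The delicate point in (iii) is that $|1-\tau t|^{-2d}$ grows at infinity, so the $O(x)$ bounds must be made uniform over the full (very long) $t$-range, which is exactly what the $s=\tau t$ rescaling and the approximate-identity behaviour of $\tau(\tau^2+s^2)^{-1}$ deliver.
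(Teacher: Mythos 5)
Your proposal is correct, but it takes a genuinely different route from the paper's. Both proofs start from Lemma~\ref{sigma_Y} with $\widehat S(\lambda)=(1-i\lambda)^{-1}$; the paper then folds the integral onto $(0,\infty)$, obtains the $\sigma_X(0)$ contribution as the \emph{limit} of the non-resonant integral (denominator $(\lambda+\sin x)^2+(1-\cos x)^2$) together with the far piece $[2\sin x,\infty)$ of the resonant one, and extracts $2\pi\phi(0)x^{-2d}$ from the two near-peak pieces $[0,\sin x]$ and $[\sin x,2\sin x]$ via Taylor expansions and the explicit $\arctan$ antiderivative. You instead (a) peel off $\sigma_X(0)$ \emph{exactly} through the identity $\lambda^2/D=1-(2\lambda\sin x+2(1-\cos x))/D$, which makes the coefficient $\sigma_X(0)/2\pi$ of $x^{2d}$ appear with no limiting argument at all, and (b) recentre at the minimiser $\lambda=-\sin x$ and rescale by $\varepsilon=1-\cos x$, so that the kernel becomes the Cauchy approximate identity and the constant $\pi=\int(1+t^2)^{-1}dt$ plays the role of the paper's two $\arctan$ limits. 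Your three-way error analysis (tail, $\phi-\phi(0)$ in the core, frozen-$\phi$ core) is sound: each piece contributes at most $O(x^{1-2d}\log(1/x))$ to $I(x)$, hence $o(x^{2d})$ after multiplication by $x^{2d}$, which is what (\ref{dala}) requires. Two small cautions. First, the sentence ``each error is $O(x)$ or $O(x^{1-2d})$, and both are $o(x^{2d})$'' is misleading as written, since $x^{1-2d}$ is \emph{not} $o(x^{2d})$ when $d\ge 1/4$; what saves you is that the $O(x^{1-2d})$ quantities are errors in $I(x)$, whose main term has order $x^{-2d}$, so they become $O(x)$ only after the multiplication by $x^{2d}$ — your final displayed conclusion $x^{2d}I(x)=-2\pi\phi(0)+O(x)$ is the correct bookkeeping. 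Second, some of your $O(x)$ claims (for instance the odd term $t\sin x/(1+t^2)$ integrated against $|1-\tau t|^{-2d}$) actually carry an extra $\log(1/x)$, which is harmless since the paper's own remainders are $O(x\ln x)$. In exchange for more delicate uniform estimates over the long $t$-range, your approach buys a cleaner identification of both the leading constant and the provenance of the $\sigma_X(0)x^{2d}/(2\pi)$ correction; the paper's approach buys fully explicit antiderivatives and hence very concrete remainders.
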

\begin{proof}
Since $f_Y$ is even, we will consider $x\in(0,\pi]$. From \eqref{fyPois1}, we have
\begin{eqnarray}\label{no-zero}
f_Y(x)=\frac{1}{2\pi}\int_0^\infty\frac{\lambda^2}{(\lambda-\sin x)^2+(1-\cos x)^2}f_X(\lambda)d\lambda+
\frac{1}{2\pi}\int_0^\infty\frac{\lambda^2}{(\lambda+\sin x)^2+(1-\cos x)^2}f_X(\lambda)d\lambda.
\end{eqnarray}
 We study both integrals in (\ref{no-zero}) near $x=0$.
 \begin{equation}\label{first1}
 \int_0^\infty\frac{\lambda^2}{(\lambda+\sin x)^2+(1-\cos x)^2}f_X(\lambda)d\lambda\overset{x\to0}{\to}\int_0^\infty f_X(\lambda)d\lambda=\frac{\sigma_X(0)}{2}
 \end{equation}
 since for fixed $\lambda$, as $x\to0$, the integrand (in the left-hand side) clearly increases towards $f_X(\lambda)$. Let us deal with the first integral in (\ref{no-zero}).
 \begin{eqnarray}\lefteqn{
\int_0^\infty\frac{\lambda^2}{(\lambda-\sin x)^2+(1-\cos x)^2}f_X(\lambda)d\lambda=\int_0^{\sin x}\frac{\lambda^2}{(\lambda-\sin x)^2+(1-\cos x)^2}f_X(\lambda)d\lambda+} \nonumber\\
&&\int_{\sin x}^{2\sin x}\frac{\lambda^2}{(\lambda-\sin x)^2+(1-\cos x)^2}f_X(\lambda)d\lambda+\int_{2\sin x}^\infty\frac{\lambda^2}{(\lambda-\sin x)^2+(1-\cos x)^2}f_X(\lambda)d\lambda. \label{decommp3}
\end{eqnarray}
 Using the fact that $f_X(\lambda)=\lambda^{-2d}\phi(\lambda)$ and $\sin^2(x/2)=(1-\cos x)/2$ and putting $\lambda=t\sin x$, we obtain for the first integral in the right hand side above, with some $u(t)\in(0,1)$ and $v(t)\in(0,1)$,
\begin{eqnarray}\lefteqn{
\int_0^{\sin x}\frac{\lambda^2}{(\lambda-\sin x)^2+4(\sin (x/2))^4}f_X(\lambda)d\lambda=(\sin x)^{-2d}\int_0^1\frac{t^{2-2d}\sin x}{(1-t)^2+\tan^2(x/2)}\phi(t\sin x)dt}\nonumber\\
&&=(\sin x)^{-2d}\int_0^1\frac{(1-t)^{2-2d}\sin
 x}{t^2+\tan^2(x/2)}\phi((1-t)\sin x)dt \nonumber\\
&&=(\sin x)^{-2d}\int_0^1(1-(2-2d)(1-u(t))^{1-2d}t)\frac{\sin x}{t^2+\tan^2(x/2)}(\phi(0)+\phi'(v(t))\sin x(1-t))dt\nonumber\\
&&=(\sin x)^{-2d}\left[\int_0^1\frac{\phi(0)\sin
 x}{t^2+\tan^2(x/2)}dt+O\left(\sin
 x\int_0^1\frac{t}{t^2+\tan^2(x/2)}dt+\sin x\int_0^1\frac{\sin
 x}{t^2+\tan^2(x/2)}dt\right)\right]. \nonumber
 \end{eqnarray}
 Putting $ t=u\tan(x/2)$ the right-hand-side of the last equation
 is equal to
\begin{eqnarray*}
&&(\sin
 x)^{-2d}\Bigg[2\phi(0)\cos^2(x/2)\int_0^{1/\tan(x/2)}\frac{1}{u^2+1}du
 +
 \\
&& \hskip 4cm O\left(\sin x\int_0^1\frac{t}{t^2+\tan^2(x/2)}dt+\sin x\int_0^1\frac{\sin x}{t^2+\tan^2(x/2)}dt\right)\Bigg]\nonumber\\
&&=(\sin
 x)^{-2d}\left[2\phi(0)\cos^2(x/2)\arctan(1/\tan(x/2))+O\left(x(\log(1+x^2)-\log
 x)+2x\arctan(2/x)\right)\right]\nonumber
\end{eqnarray*}
Then
\begin{equation}\label{first1-2}
\int_0^{\sin x}\frac{\lambda^2}{(\lambda-\sin x)^2+4(\sin
 (x/2))^4}f_X(\lambda)d\lambda
=x^{-2d}\left(\phi(0)\pi+O(x\log x)\right),\qquad\textrm{as
}x\to0.
\end{equation}
Similarly, we have
\begin{eqnarray}\label{first2-2}\lefteqn{
\int_{\sin x}^{2\sin x}\frac{\lambda^2}{(\lambda-\sin x)^2+(\sin x/2))^4}f_X(\lambda)d\lambda=(\sin x)^{-2d}\int_1^2\frac{t^{2-2d}\sin x}{(1-t)^2+\tan^2(x/2)}\phi(t\sin x)dt}\nonumber\\
&&=(\sin x)^{-2d}\int_0^1\frac{(1+t)^{2-2d}\sin x}{t^2+\tan^2(x/2)}\phi((1+t)\sin x)dt\nonumber\\
&&=(\sin x)^{-2d}\int_0^1(1+(2-2d)(1+u(t))^{1-2d}t)\frac{\sin x}{t^2+\tan^2(x/2)}(\phi(0)+\phi'(v(t))\sin x(1+t))dt\qquad\nonumber\\
&&= x^{-2d}\left(\phi(0)\pi+O(x\log x)\right).
\end{eqnarray}
Then, we have as $x\to0$
\begin{equation}\label{first2}
\int_{2\sin x}^\infty\frac{\lambda^2}{(\lambda-\sin x)^2+\tan^2(x/2)}f_X(\lambda)d\lambda\to\int_0^\infty f_X(\lambda)d\lambda=\frac{\sigma_X(0)}{2},
\end{equation}
since the integrand is bounded uniformly in $x$ by $4f_X(\lambda)$ and converges (as $x\to0$) to $f_X(\lambda)$ and hence we can apply Lebesgue's theorem. Combining (\ref{first1}) and (\ref{first2}) as well as (\ref{first1-2}) and (\ref{first2-2}), we obtain that
$$
f_Y(x)=x^{-2d}f^*_Y(x),\qquad f^*_Y(x)=\phi(0)+\frac{\sigma_X(0)}{2\pi}x^{2d}+o\left(x^{2d}\right)\quad\textrm{as }x\to0. 
$$
Moreover, $f_Y^*$ is continuous and positive on $[-\pi,,\pi]$. Indeed, the continuity of $f_Y^*$ follows from the fact that the 2nd integrand in the right hand side of \eqref{no-zero} is continuous and uniformly bounded in $x$ by $4f_X(\lambda)$ which is integrable. As for the first integral in the right hand side of \eqref{no-zero}, after splitting it into three terms as in \eqref{decommp3} and multiplying it by $x^{2d}$, we see that Lebesgue's dominated convergence theorem still applies. This completes the proof of Proposition \ref{spectral-lemma}. 
 \end{proof}
We now present a lemma that gives a quite precise expression of the
covariance function of $X$ from its spectral density. We will be
imposing the following condition on $f_X$.
\vskip.3cm 
\noindent \textbf{Condition $H_f$:}
$f_X(\lambda)=c\vert\lambda\vert^{-2d}(1-h(\lambda))$, $0<d<1/2$, where $h$ is
a nondecreasing function with $h(0)=0$ and $h(x)\to1$ as $x\to\infty$ and $h$ is differentiable at 0.
We notice that condition $H_f$ is not one of the usual slowly varying type conditions for Tauberian and Abelian theorems in the context of long range dependence (see \cite{leonenko}). However, it guarantees a uniform control of the remainder $g(x)$ in (\ref{cov-behav}) rather than at infinity only.
\begin{remm}\label{rem-change}
 If the spectral density $f_X$ satisfies $H_f$ instead of (\ref{eq:lmspe}), then 
 Proposition \ref{spectral-lemma} still holds with
 $c:=c(d)$ instead of $\phi(0)$. The proof is essentially the same and is
 omitted.
\end{remm}
\begin{Lem}\label{X-cov-density}
Assume that condition $H_f$ is satisfied. Then, there exist positive constants $C(d)$ and $ c(d)$ such that for all $x>0$,
\begin{equation}\label{cov-behav}
\sigma_X(x)=c(d)x^{2d-1}+g(x),
\end{equation}
with $\vert g(x)\vert\le \frac{C(d)}{\vert x\vert}.$
\end{Lem}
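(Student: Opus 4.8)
The plan is to realize $\sigma_X$ as the inverse Fourier transform of $f_X$ and to peel off the exact power-law contribution, leaving a remainder entirely controlled by $h$. Since $f_X$ is even and real, I would first write
\[
\sigma_X(x)=\int_{-\infty}^\infty e^{i\lambda x}f_X(\lambda)\,d\lambda
=2c\int_0^\infty \cos(\lambda x)\,\lambda^{-2d}\bigl(1-h(\lambda)\bigr)\,d\lambda ,
\]
which is a legitimate (improper) integral for $0<d<1/2$: near $\lambda=0$ the integrand is $O(\lambda^{1-2d})$ because $h(0)=0$ and $h$ is differentiable there, while at infinity $\lambda^{-2d}(1-h(\lambda))=f_X(\lambda)/c$ is absolutely integrable. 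The main term will come from the classical cosine–transform identity $\int_0^\infty u^{-2d}\cos u\,du=\Gamma(1-2d)\sin(\pi d)$, which after the scaling $u=\lambda x$ gives $\int_0^\infty \lambda^{-2d}\cos(\lambda x)\,d\lambda=\Gamma(1-2d)\sin(\pi d)\,x^{2d-1}$; accordingly I set $c(d):=2c\,\Gamma(1-2d)\sin(\pi d)>0$.

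To extract this term cleanly and, at the same time, expose the remainder, I would introduce the tail function
\[
F(\lambda,x):=\int_\lambda^\infty \cos(\mu x)\,\mu^{-2d}\,d\mu ,\qquad \partial_\lambda\bigl(-F(\lambda,x)\bigr)=\cos(\lambda x)\,\lambda^{-2d},
\]
and integrate by parts in the Lebesgue--Stieltjes sense against $d(1-h)=-dh$:
\[
\int_0^\infty \cos(\lambda x)\,\lambda^{-2d}\bigl(1-h(\lambda)\bigr)\,d\lambda=\Bigl[-(1-h(\lambda))F(\lambda,x)\Bigr]_0^\infty-\int_0^\infty F(\lambda,x)\,dh(\lambda).
\]
The boundary contribution at infinity vanishes because $1-h(\lambda)\to0$ and $F(\lambda,x)\to0$, while at $\lambda=0$ the bracket equals $F(0,x)=\Gamma(1-2d)\sin(\pi d)\,x^{2d-1}$. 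Multiplying by $2c$ therefore yields exactly $\sigma_X(x)=c(d)x^{2d-1}+g(x)$ with $g(x)=-2c\int_0^\infty F(\lambda,x)\,dh(\lambda)$.

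It then remains to estimate $g$. A single integration by parts on the tail gives $\bigl|\int_a^\infty \cos u\,u^{-2d}\,du\bigr|\le 2a^{-2d}$ for every $a>0$, and the scaling $u=\mu x$ upgrades this to $|F(\lambda,x)|\le 2\lambda^{-2d}x^{-1}$ for all $\lambda,x>0$. Feeding this into the Stieltjes integral gives, uniformly in $x>0$,
\[
|g(x)|\le 2c\int_0^\infty |F(\lambda,x)|\,dh(\lambda)\le 4c\,x^{-1}\int_0^\infty \lambda^{-2d}\,dh(\lambda),
\]
so that $C(d)=4c\,K$ with $K:=\int_0^\infty \lambda^{-2d}\,dh(\lambda)$. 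Near infinity $\lambda^{-2d}\le1$ and $dh$ is a finite measure (total mass $h(\infty)-h(0)=1$), while near zero the differentiability of $h$ at $0$ forces $h(\lambda)\le C\lambda$; integrating by parts then reduces $\int_0^1\lambda^{-2d}\,dh(\lambda)$ to $O\bigl(\int_0^1\lambda^{-2d}\,d\lambda\bigr)<\infty$ for $d<1/2$, giving $K<\infty$.

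I expect the main obstacle to be the rigorous justification of the Stieltjes integration by parts together with the finiteness of $K$: the transform integrals are only conditionally convergent at infinity and carry an integrable singularity at the origin, so one must work on $[\eta,R]$ and pass to the limits $\eta\to0$ and $R\to\infty$, verifying that the boundary terms vanish as claimed. The decisive quantitative input is that the single-point hypothesis ``$h$ differentiable at $0$ with $h(0)=0$'' forces $h(\lambda)=O(\lambda)$ near the origin, which is precisely what keeps $K$ finite and pins the remainder at the sharp order $x^{-1}$ rather than the larger order $x^{2d-1}$ of the leading term.
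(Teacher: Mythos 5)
Your proof is correct and follows essentially the same route as the paper: represent $\sigma_X$ as a cosine transform, extract the exact power-law term via the classical identity $\int_0^\infty u^{-2d}\cos u\,du=\Gamma(1-2d)\sin(\pi d)$, and control the remainder by a Lebesgue--Stieltjes integration by parts against $dh$, using the monotonicity of $h$ and its differentiability at $0$ to keep the resulting constant finite. The only cosmetic difference is that you use the tail primitive $F(\lambda,x)=\int_\lambda^\infty\cos(\mu x)\mu^{-2d}\,d\mu$ (so the leading term appears as the boundary contribution at $\lambda=0$), whereas the paper uses the primitive $U(t)=\int_0^t\cos(\lambda x)\lambda^{-2d}\,d\lambda$ and peels off the leading term separately; the two bounds on the remainder, $4c\,x^{-1}\int_0^\infty\lambda^{-2d}\,dh(\lambda)$ and $4d\,x^{-1}\int_0^\infty\lambda^{-2d-1}h(\lambda)\,d\lambda$, coincide after one further integration by parts.
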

\begin{proof} Let $x>0$ be fixed.
Since  $f_X$ is even we have,
$$
\sigma_X(x)=2\int_0^\infty \cos(x\lambda)f_X(\lambda)d\lambda.
$$
Without loss of generality, we take $2c=1$ in $H_f$
and by the formula 3.761.9 of \cite{gra}
$$
\int_0^\infty\cos(x\lambda)\lambda^{-2d}d\lambda=\Gamma(1-2d)\sin(\pi d) x^{2d-1}=:c(d)x^{2d-1}.
$$
Therefore, it remains to show that for some $C(d)>0$,
$$
\left\vert\int_0^\infty \cos(\lambda x)\lambda^{-2d}h(\lambda)d\lambda\right\vert\le C(d)x^{-1}.
$$
The rest of the proof relies on applying integration by parts for Stieltjes integrals. \\ 
Let $dU(\lambda)=\cos(\lambda x)\lambda^{-2d}$. We have (by one integration by parts)
\begin{eqnarray*}\lefteqn{
U(t)=\int_0^t\cos(\lambda x)\lambda^{-2d}d\lambda}\\
&&=\frac{1}{x}
\left[\lambda^{-2d}\sin(\lambda x)\right]_{\lambda=0}^{\lambda=t}+\frac{2d}{x}\int_0^t\lambda^{-2d-1}\sin(\lambda x)d\lambda\\
&&=\frac{1}{x}\left(t^{-2d}\sin(tx)+2d\int_0^t\lambda^{-2d-1}\sin(\lambda x)d\lambda\right),
\end{eqnarray*}
clearly $U$ is bounded and
$$
\underset{t\to\infty}{\lim}U(t)=\frac{2d}{x}\int_0^\infty\lambda^{-2d-1}\sin(\lambda x)d\lambda.
$$
Using the fact that $h$ is nondecreasing, $h(\lambda)\to1$, as $\lambda\to\infty$, and $h(0)=0$, we obtain (via integration by parts at some steps in the calculation below 
\begin{equation}
 \int_a^b \cos(\lambda x)\lambda^{-2d}h(\lambda)d\lambda=\int_a^ b h(\lambda)dU(\lambda)=
\left[U(\lambda)h(\lambda)\right]_a^b-\int_a^b U(\lambda)dh(\lambda), 
\end{equation}
with 
$$\left[U(\lambda)h(\lambda)\right]_a^b \xrightarrow[b\to\infty]{a\to 0 }\frac{2d}{x}\int_0^\infty\lambda^{-2d-1}\sin(\lambda x)d\lambda, $$ 

\begin{eqnarray*} 
-\int_a^b U(\lambda)dh(\lambda) &&= - \frac{1}{x}\int_a^b\left(\lambda^{-2d}\sin(\lambda x)+2d\int_0^\lambda u^{-2d-1}\sin(ux)du\right)dh(\lambda)\\
&&=- \frac{1}{x}\int_a^b\lambda^{-2d}\sin(\lambda x)dh(\lambda)-\frac{2d}{x}\int_a^b\left(\int_0^\lambda u^{-2d-1}\sin(ux)du\right)dh(\lambda),\\
\end{eqnarray*}
also 
\begin{eqnarray*} 
&& -\frac{2d}{x}\int_a^b\left(\int_0^\lambda u^{-2d-1}\sin(ux)du\right)dh(\lambda) \\ && = -\frac{2d}{x} h(b) \int_0^b \lambda^{-2d-1}\sin(\lambda x)d\lambda +\frac{2d}{x} h(a) \int_0^a\lambda^{-2d-1}\sin(\lambda x)d\lambda 
+\frac{2d}{x}\int_a^b \lambda^{-2d-1}\sin(\lambda x)h(\lambda)d\lambda.\\
\end{eqnarray*}
Since 
$$-\frac{2d}{x} h(b) \int_0^b \lambda^{-2d-1}\sin(\lambda x)d\lambda +\frac{2d}{x} h(a) \int_0^a\lambda^{-2d-1}\sin(\lambda x)d\lambda \xrightarrow[b\to\infty]{a\to 0 } - \frac{2d}{x}\int_0^\infty\lambda^{-2d-1}\sin(\lambda x)d\lambda, $$ 
 $ \int_a^b U(\lambda)dh(\lambda)$ has the same limit as 
$$- \frac{1}{x}\int_a^b\lambda^{-2d}\sin(\lambda x)dh(\lambda) + \frac{2d}{x}\int_a^b \lambda^{-2d-1}\sin(\lambda x)h(\lambda)d\lambda,$$
as $a\to 0 $ and $b\to\infty$. 

\begin{eqnarray*} && \left| - \frac{1}{x}\int_a^b\lambda^{-2d}\sin(\lambda x)dh(\lambda) + \frac{2d}{x}\int_a^b \lambda^{-2d-1}\sin(\lambda x)h(\lambda)d\lambda\right| \\
 &&\leq 
\frac{1}{x}\int_a^b \lambda^{-2d}dh(\lambda)+\frac{2d}{x}\int_a^b\lambda^{-2d-1}h(\lambda)d\lambda\\
&& = \frac{1}{x} \left(h(b) b^{-2d} -h(a) a^{-2d}\right) + \frac{2d}{x}\int_a^b \lambda^{-2d-1}h(\lambda)d\lambda+\frac{2d}{x}\int_a^b \lambda^{-2d-1}h(\lambda)d\lambda\\
&& =\frac{1}{x} \left(h(b) b^{-2d} -h(a) a^{-2d}\right) + \frac{4d}{x}\int_a^b\lambda^{-2d-1}h(\lambda)d\lambda \xrightarrow[b\to 0] {a\to 0} \frac{4d}{x}\int_0^\infty\lambda^{-2d-1}h(\lambda)d\lambda :=\frac{C(d)}{x}.
\end{eqnarray*}
We note that the integral above is indeed finite since $h$ is a bounded function, $h(0)=0$, and is differentiable at zero.

The proof of Lemma \ref{X-cov-density} is now complete.
\end{proof}

\begin{Cor}\label{var-bound}
If $T_n$ is a Poisson process and $f_X$ satisfies condition $H_f$ then 
$$
\textrm{Var}\left(\sigma_X(T_r)\right)=O\left(r^{-\alpha}\right),\qquad\textrm{as }r\to\infty,
$$
where $\alpha=\min(2,3-4d)$.
\end{Cor}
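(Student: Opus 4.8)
The plan is to reduce everything to explicit moments of the Gamma law. Since $T_n$ is a rate-one Poisson process, $T_r=\Delta_1+\cdots+\Delta_r$ is $\mathrm{Gamma}(r,1)$-distributed, with density $t^{r-1}e^{-t}/\Gamma(r)$ on $\R^+$ and power moments $\E[T_r^{a}]=\Gamma(r+a)/\Gamma(r)$ whenever $r+a>0$; for $r$ large these exist for every exponent appearing below. By Lemma \ref{X-cov-density} I may write $\sigma_X(T_r)=c(d)T_r^{2d-1}+g(T_r)$ with $|g(x)|\le C(d)x^{-1}$, and the elementary inequality $\var(A+B)\le 2\var(A)+2\var(B)$ lets me treat the two pieces separately. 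The relevant regime is $0<d<1/2$ (forced by $H_f$ for genuine long memory), which is where the exponent comparisons below work out.

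The remainder is harmless. Using $\var(g(T_r))\le \E[g(T_r)^2]\le C(d)^2\,\E[T_r^{-2}]$ together with $\E[T_r^{-2}]=\Gamma(r-2)/\Gamma(r)=1/[(r-1)(r-2)]$, I obtain $\var(g(T_r))=O(r^{-2})$; since $d>0$ this is $O(r^{-2+2d})$, inside the target.

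The main term is the crux. I would compute $\var\big(T_r^{2d-1}\big)=\E[T_r^{4d-2}]-\big(\E[T_r^{2d-1}]\big)^2 =\Gamma(r+4d-2)/\Gamma(r)-\big(\Gamma(r+2d-1)/\Gamma(r)\big)^2$ and insert the second-order ratio expansion $\Gamma(r+a)/\Gamma(r)=r^{a}\big(1+\tfrac{a(a-1)}{2r}+O(r^{-2})\big)$. The delta-method heuristic (the fluctuations of $T_r$ are of size $\sqrt r$ around its mean $r$, while $x\mapsto x^{2d-1}$ has derivative of size $r^{2d-2}$ there) predicts a variance of order $(r^{2d-2})^2\cdot r=r^{4d-3}$, and the expansion confirms this exactly: the leading $r^{4d-2}$ contributions cancel, the surviving coefficient of $r^{4d-3}$ being $\tfrac{(4d-2)(4d-3)}{2}-(2d-1)(2d-2)=(2d-1)^2$, so that $\var\big(c(d)T_r^{2d-1}\big)\sim c(d)^2(2d-1)^2\,r^{4d-3}$. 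Because $4d-3\le 2d-2$ precisely when $d\le 1/2$, this is again $O(r^{-2+2d})$, and combining the two pieces finishes the proof.

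The one genuinely delicate point I anticipate is this cancellation of the $r^{4d-2}$ terms: a crude first-order estimate bounds both $\E[T_r^{4d-2}]$ and $\big(\E[T_r^{2d-1}]\big)^2$ by a constant times $r^{4d-2}$, which for $d>0$ is strictly larger than the target $r^{2d-2}$ and hence useless. One must retain the $1/r$ correction in the Gamma ratio and verify that the order-$r^{4d-2}$ parts agree, leaving the true order $r^{4d-3}$. An alternative mean-value/delta-method route would give the same heuristic but would force me to control $T_r^{4d-4}$ on the rare event $\{T_r \text{ small}\}$; working directly with the exact Gamma moments sidesteps this entirely, which is why I would follow that route.
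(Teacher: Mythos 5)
Your proof is correct and takes essentially the same route as the paper's: both decompose $\sigma_X(T_r)=c(d)T_r^{2d-1}+g(T_r)$ via Lemma \ref{X-cov-density}, bound the remainder through $\E[T_r^{-2}]=1/[(r-1)(r-2)]$, and obtain $\var(T_r^{2d-1})\sim(2d-1)^2r^{4d-3}$ from the second-order expansion of $\Gamma(r+a)/\Gamma(r)$, the cancellation of the $r^{4d-2}$ terms being exactly the point the paper also relies on. The only cosmetic difference is that you split the variance with $\var(A+B)\le 2\var(A)+2\var(B)$ while the paper expands it and controls the cross term by Cauchy--Schwarz.
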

\begin{proof} We have from the previous lemma,
\begin{eqnarray*}\lefteqn{
\textrm{Var}\left(\sigma_X(T_r)\right)=\textrm{Var}\left(c(d)T_r^{2d-1}+g(T_r)\right)}\\
&&=c^2(d)\textrm{Var}\left(T_r^{2d-1}\right)+\textrm{Var}\left(g(T_r)\right)+2c(d)\textrm{Cov}\left(T_r^{2d-1},g(T_r)\right)\\
&&\le c^2(d)\textrm{Var}\left(T_r^{2d-1}\right)+C^2(d)\mathbb{E}\left(T_r^{-2}\right)+2c(d)C(d)\left[\textrm{Var}\left(T_r^{2d-1}\right)\right]^{1/2}
\left[\mathbb{E}\left(T_r^{-2}\right)\right]^{1/2}.
\end{eqnarray*}
For $r\ge3$, as $T_r$ has Gamma distribution with parameters $(r,1)$, we have
$$
\mathbb{E}\left(T_r^{-2}\right)=\int_0^\infty \frac{x^{r-2-1}}{\Gamma(r)}e^{-x}dx=\frac{\Gamma(r-2)}{\Gamma(r)}=\frac{1}{(r-1)(r-2)}=O(r^{-2}).
$$
Also,
$$
\textrm{Var}\left(T_r^{2d-1}\right)=\mathbb{E}\left(T_r^{4d-2}\right)-\left(\mathbb{E}\left(T_r^{2d-1}\right)\right)^2=
\frac{\Gamma(r-2+4d)}{\Gamma(r)}-\left(\frac{\Gamma(r-1+2d)}{\Gamma(r)}\right)^2
$$
We know that as $r\to\infty$,
$$
\frac{\Gamma(r-a)}{\Gamma(r)}=r^{-a}\left(1-\frac{a(-a+1)}{2r}+O\left(\frac{1}{r^2}\right)\right),
$$
and therefore we obtain that
$$
\textrm{Var}\left(T_r^{2d-1}\right)=(1-2d)^2r^{-2(1-2d)-1}+o\left(r^{-2(1-2d)-1}\right)=O\left(n^{-\alpha}\right),
$$
which completes the proof of the corollary.
\end{proof}

\section{Asymptotic theory of the periodogram}\label{sec:perio}
We consider in this section a stationary long memory zero-mean Gaussian process $X=(X_t)_{ t\in\mathbb{R}^+}$ having a spectral density of the form (\ref{eq:lmspe}). Let $Y = (X_{T_n})_{n\in \mathbb{N}}$, where $(T_n)_{n\in \mathbb{N}}$ is a Poisson process with rate equal 1 (actually any rate will do). As shown in \cite{Phi2} and in contrast with the original process $X$, while $Y$ remains marginally normally distributed, it is no longer jointly Gaussian and, as a result, $Y$ is not a linear process.

In this section, we extend some well-known facts about periodogram properties to the randomly sampled processes $Y$. In particular, our main result will be to establish that the normalized periodogram of $Y$ will asymptotically converge to a weighted $\chi^2$ distribution. 
\begin{thm}\label{theo1}
Assume that $X$ is a stationary Gaussian process satisfying $H_f$ and let $Y = (X_{T_n})_{n\in \mathbb{N}}$ where $(T_n)_{n\in \mathbb{N}}$ is a Poisson process with rate equal 1. Let

$$I_n(\lambda_j)=\frac{1}{2\pi n}\left\vert\sum_{k=1}^nY_ke^{ik\lambda_j}\right\vert^2,$$
be the periodogram of $Y_1,\ldots,Y_n$ at Fourier frequency $\lambda_j=2\pi j/n$ for $j\in \lbrace 1,~\dots~, \lfloor n/2\rfloor \rbrace$. Then, we have for any fixed number of Fourier frequencies $\nu \geq 1$, and any $j_1,\dots ,j_\nu \in \lbrace 1, \dots ,\lfloor n/2\rfloor \rbrace$ all distinct integers
\begin{equation}\label{limit}
\left(\frac{I_n(\lambda_{j_1})}{f_Y(\lambda_{j_1})},\cdots,\frac{I_n(\lambda_{j_\nu})}{f_Y(\lambda_{j_\nu})}\right)\overset{\mathcal{D}}{\to}
\left(L_{j_1}(d)[Z^2_1(j_1)+Z^2_2(j_1)],\cdots,L_{j_\nu }(d)[Z^2_1(j_\nu)+Z^2_2(j_\nu )]\right),
\end{equation}
where $(Z_1(1),Z_2(1),\ldots,Z_1([ n/2] ),Z_2([ n/2] )$ is a zero-mean Gaussian vector, with
$Z_1(j),Z_2(k)$ are independent for all $j,k=1,\ldots, [n/2] $ and
\begin{equation}\label{eq1}
\textrm{Var}(Z_1(j))=\frac{1}{2}-\frac{R_j(d)}{L_j(d)}
\end{equation}
and
\begin{equation}\label{eq2}
\textrm{Var}(Z_2(j))=\frac{1}{2}+\frac{R_j(d)}{L_j(d)},
\end{equation}
and for $j\neq k$,
\begin{equation}\label{eq3}
\textrm{Cov}(Z_1(j),Z_1(k))=\frac{L_{j,k}(d)-R_{j,k}(d)}{\sqrt{L_j(d)L_{k}(d)}}
\end{equation}
\begin{equation}\label{eq4}
\textrm{Cov}(Z_2(j),Z_2(k))=\frac{L_{j,k}(d)+R_{j,k}(d)}{\sqrt{L_j(d)L_{k}(d)}},
\end{equation}
with
\begin{equation}\label{eq5}
L_j(d)=\frac{2}{\pi}\int_{-\infty}^\infty\frac{\sin^2(\lambda/2)}{(2\pi j-\lambda)^2}\left\vert\frac{\lambda}{2\pi j}\right\vert^{-2d}d\lambda,
\end{equation}
\begin{equation}\label{eq6}
R_j(d)=\frac{1}{\pi}\int_{-\infty}^\infty\frac{\sin^2(\lambda/2)}{(2\pi j-\lambda)(2\pi j+\lambda)}\left\vert\frac{\lambda}{2\pi j}\right\vert^{-2d}d\lambda,
\end{equation}
\begin{equation}\label{eq7}
L_{j,k}(d)=\frac{(jk)^d}{\pi}\int_{-\infty}^\infty\frac{\sin^2(\lambda/2)}{(2\pi k-\lambda)(2\pi j-\lambda)}\left\vert\frac{\lambda}{2\pi}\right\vert^{-2d}d\lambda,
\end{equation}
and
\begin{equation}\label{eq8}
R_{j,k}(d)=\frac{(jk)^d}{\pi}\int_{-\infty}^\infty\frac{\sin^2(\lambda/2)}{(2\pi k+\lambda)(2\pi j-\lambda)}\left\vert\frac{\lambda}{2\pi}\right\vert^{-2d}d\lambda.
\end{equation}
\end{thm}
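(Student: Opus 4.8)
The plan is to reduce the statement to a multivariate central limit theorem for the real and imaginary parts of the discrete Fourier transform, and then to exploit the fact that, \emph{conditionally on the sampling instants}, the sampled process is exactly Gaussian. Write $J_n(\lambda)=(2\pi n)^{-1/2}\sum_{k=1}^n Y_k e^{ik\lambda}$, so that $I_n(\lambda_j)=|J_n(\lambda_j)|^2=A_n(\lambda_j)^2+B_n(\lambda_j)^2$ with
\[
A_n(\lambda_j)=\frac{1}{\sqrt{2\pi n}}\sum_{k=1}^n Y_k\cos(k\lambda_j),\qquad
B_n(\lambda_j)=\frac{1}{\sqrt{2\pi n}}\sum_{k=1}^n Y_k\sin(k\lambda_j).
\]
By the continuous mapping theorem applied coordinatewise to $(a,b)\mapsto a^2+b^2$, it is enough to prove that the $2s$-dimensional vector
\[
V_n=\left(\frac{A_n(\lambda_{j_1})}{\sqrt{f_Y(\lambda_{j_1})}},\frac{B_n(\lambda_{j_1})}{\sqrt{f_Y(\lambda_{j_1})}},\dots,\frac{A_n(\lambda_{j_s})}{\sqrt{f_Y(\lambda_{j_s})}},\frac{B_n(\lambda_{j_s})}{\sqrt{f_Y(\lambda_{j_s})}}\right)
\]
converges in law to the centred Gaussian vector with independent blocks $\bigl(\sqrt{L_{j_\ell}(d)}\,Z_1(j_\ell),\sqrt{L_{j_\ell}(d)}\,Z_2(j_\ell)\bigr)$. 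The identities (\ref{eq1})--(\ref{eq8}) then amount precisely to the statement that the limiting covariance matrix $\Sigma$ of $V_n$ has the announced entries (and $\Sigma$ is automatically positive semidefinite, being a limit of covariance matrices).

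First I would condition on $\mathcal F_T=\sigma(T_1,T_2,\dots)$. Since $X$ is Gaussian and independent of $(T_n)$, given $\mathcal F_T$ the family $(Y_k)=(X_{T_k})$ is jointly Gaussian with $\cov(Y_p,Y_q\mid\mathcal F_T)=\sigma_X(|T_p-T_q|)$. Hence $V_n\mid\mathcal F_T$ is \emph{exactly} centred Gaussian, $V_n\mid\mathcal F_T\sim\mathcal N(0,\Sigma_n(\mathbf T))$, where $\Sigma_n(\mathbf T)$ is the normalised conditional covariance matrix, e.g.
\[
\Sigma_n(\mathbf T)_{A_j,A_k}=\frac{1}{\sqrt{f_Y(\lambda_j)f_Y(\lambda_k)}}\,\frac{1}{2\pi n}\sum_{p,q=1}^n\cos(p\lambda_j)\cos(q\lambda_k)\,\sigma_X(|T_p-T_q|),
\]
and similarly for the $A$--$B$, $B$--$A$ and $B$--$B$ blocks. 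Taking conditional characteristic functions gives $\E[e^{it^\top V_n}]=\E[\exp(-\tfrac12 t^\top\Sigma_n(\mathbf T)t)]$, so by bounded convergence it suffices to show $\Sigma_n(\mathbf T)\to\Sigma$ in probability; Lévy's continuity theorem then yields the claimed convergence. I would establish this by splitting into the convergence of the mean $\E[\Sigma_n(\mathbf T)]$ and the vanishing of the fluctuation.

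For the mean, since $\E[\sigma_X(|T_p-T_q|)]=\sigma_Y(p-q)$ (Lemma \ref{sigma_Y}), $\E[\Sigma_n(\mathbf T)]$ is exactly the unconditional normalised covariance matrix of the Fourier transform of $Y$, depending only on $f_Y$. Writing these covariances through Dirichlet kernels, e.g.
\[
\E\!\left[A_n(\lambda_j)A_n(\lambda_k)\right]=\frac{1}{2\pi n}\int_{-\pi}^{\pi}f_Y(x)\,\tfrac14\bigl(D_n(x+\lambda_j)+D_n(x-\lambda_j)\bigr)\overline{\bigl(D_n(x+\lambda_k)+D_n(x-\lambda_k)\bigr)}\,dx,
\]
with $D_n(\psi)=\sum_{k=1}^n e^{ik\psi}$, I would insert the behaviour $f_Y(x)=|x|^{-2d}f_Y^*(x)$ with $f_Y^*(0)=c(d)>0$ (Proposition \ref{spectral-lemma} and the ensuing Remark), rescale by $x=\lambda/n$ around the peaks at $\pm\lambda_j$, and pass to the limit. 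The diagonal kernel terms $|D_n(x\pm\lambda_j)|^2$ produce the factor $L_j(d)$ of (\ref{eq5}) (and $L_{j,k}(d)$ of (\ref{eq7}) across frequencies), while the cross terms $D_n(x+\lambda_j)\overline{D_n(x-\lambda_j)}$ produce the $R$-integrals (\ref{eq6}), (\ref{eq8}); the opposite signs in $|C_n|^2$ and $|S_n|^2$ give the $\tfrac12 L_j\mp R_j$ structure for the cosine/sine blocks and the vanishing of all $A$--$B$ covariances, i.e.\ (\ref{eq1})--(\ref{eq4}). \textbf{This is the most laborious part, and the main technical obstacle:} justifying the interchange of limit and integral in the presence of the blow-up of $f_Y$ at the origin (the long-memory effect). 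I would handle it by a dominated-convergence argument with a dominating function adapted to the rescaled Fejér kernels, using the covariance estimate of Lemma \ref{X-cov-density}.

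The fluctuation step is where the loss of joint Gaussianity is actually absorbed, and it rests on Corollary \ref{var-bound}. Each entry of $\Sigma_n(\mathbf T)$ is a normalised bilinear form $\frac{1}{2\pi n}\sum_{p,q}w_p w'_q\,\sigma_X(|T_p-T_q|)$ with $|w|,|w'|\le1$, whose variance involves $\cov\bigl(\sigma_X(|T_p-T_q|),\sigma_X(|T_{p'}-T_{q'}|)\bigr)$. Since $|T_p-T_q|\overset{d}{=}T_{|p-q|}$, Cauchy--Schwarz together with Corollary \ref{var-bound} gives
\[
\bigl|\cov\bigl(\sigma_X(|T_p-T_q|),\sigma_X(|T_{p'}-T_{q'}|)\bigr)\bigr|\le C\,|p-q|^{d-1}\,|p'-q'|^{d-1},
\]
the diagonal pairs $p=q$ contributing nothing. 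Since $\sum_{p\neq q}|p-q|^{d-1}=O(n^{1+d})$ and $f_Y(\lambda_j)\asymp n^{2d}$ for fixed $j$, the variance of a normalised entry is
\[
\var\bigl(\Sigma_n(\mathbf T)_{A_j,A_k}\bigr)=O\!\left(\frac{n^{2+2d}}{f_Y(\lambda_j)f_Y(\lambda_k)\,n^{2}}\right)=O\!\left(\frac{n^{2+2d}}{n^{4d}\,n^{2}}\right)=O\!\left(n^{-2d}\right)\longrightarrow0,
\]
for $d>0$. Together with the mean computation this yields $\Sigma_n(\mathbf T)\to\Sigma$ in probability and completes the proof. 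Equivalently, this estimate controls the fourth-order cumulants of $Y$, which are exactly the covariances of the conditional covariances $\sigma_X(|T_\cdot-T_\cdot|)$; that is the precise mechanism by which the non-joint-Gaussianity is handled, and the conditional-Gaussianity reduction is what removes the need for any genuine non-Gaussian central limit theorem.
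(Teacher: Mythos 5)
Your proposal is correct and follows essentially the same route as the paper: condition on the sampling times to get exact joint Gaussianity, reduce to convergence in probability of the conditional covariance matrix via bounded convergence of characteristic functions, and split that into convergence of the mean and vanishing of the fluctuation, the latter controlled exactly as you do by Cauchy--Schwarz and the bound $\var(\sigma_X(T_r))=O(r^{2d-2})$ of Corollary \ref{var-bound}. The only divergence is that the step you flag as the main technical obstacle --- the Dirichlet-kernel/rescaling computation showing $\E[\Sigma_n(\mathbf T)]\to\Sigma$ --- is not carried out in the paper but simply invoked from Theorem 5 of \cite{Hurvich}, which requires only second-order stationarity of $Y$ and the spectral behaviour (\ref{spectral-form}).
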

\begin{proof}
We will prove the broader result

\begin{align}\label{clt}
Z_n: = \Bigg(\frac{1}{\sqrt{2\pi nf_Y(\lambda_{j_1})}}&\sum_{r=1}^n\cos(r\lambda_{j_1})X_{T_r},\frac{1}{\sqrt{2\pi nf_Y(\lambda_{j_1})}}\sum_{r=1}^n \sin(r\lambda_{j_1})X_{T_r},\cdots, \\ \nonumber
&\frac{1}{\sqrt{2\pi nf_Y(\lambda_{j_\nu})}}\sum_{r=1}^n\cos(r\lambda_{j_\nu})X_{T_r},\frac{1}{\sqrt{2\pi nf_Y(\lambda_{j_\nu })}}\sum_{r=1}^n \sin(r\lambda_{j_\nu })X_{T_r}\Bigg) \\ \nonumber
& \xrightarrow[]{\mathcal{D}}\left(\sqrt{L_{j_1}(d)}(Z_1(j_1),Z_2(j_1)),\cdots,\sqrt{L_{j_\nu}(d)}(Z_1(j_\nu ),Z_2(j_\nu ))\right).
\end{align}

Conditionally on $T_1,\ldots,T_n$, the vector $(X_{T_1},\ldots,X_{T_n})$ is Gaussian, and hence so is 
$Z_n$. Its covariance matrix $\Sigma_T= \var(Z_n| T_1,\ldots,T_n)$ has $(i,k)$ entry 
of the form 
$$
\frac{1}{2\pi n\sqrt{f_Y(\lambda_{j_i})f_Y(\lambda_{j_k})}}\sum_{r=1}^n\sum_{s=1}^n\sigma_X(T_r-T_s)h_{i,k}(r,s)
$$
where
$$
h_{i,k}(r,s)=\cos(r\lambda_{j_i})\cos(s\lambda_{j_k}),\textrm{ or }\cos(r\lambda_{j_i})\sin(s\lambda_{j_k}),\textrm{ or }\sin(r\lambda_{j_i})\sin(s\lambda_{j_k}). 
$$
We prove (\ref{clt}) using the characteristic function: since $X$ and $T$ are independent, for $u\in\mathbb{R}^{2\nu}$, and with $u'$ being the transpose of $u$,
\begin{align*}
 \mathbb{E}( e^{iu'Z_n}) & = \mathbb{E} \left( \mathbb{E}\left( e^{iu'Z_n} \Big | T_1,\ldots,T_n \right) \right) = \mathbb{E} \left( \exp\left(-\frac{1}{2}u'\Sigma_T u\right) \right). 
\end{align*}
As the characteristic function is bounded, it will suffice to show that
\begin{equation}\label{proba}
\Sigma_T\overset{P}{\to}\Sigma,
\end{equation} where $\Sigma$ is the variance-covariance matrix of
$(\sqrt{L_1(j_1)}(Z_1(j_1),Z_2(j_1)),\cdots,\sqrt{L_\nu(j_\nu)}(Z_1(j_\nu ),Z_2(j_\nu )))$.

When $i$ and $k$ are fixed, the form of $h_{i,k}(r,s)$ is the same for all $r$ and $s$ 
and hence $\mathbb{E}\left(\Sigma_T\right)$ will have entries of the form
$$
\frac{1}{2\pi n\sqrt{f_Y(\lambda_{j_i})f_Y(\lambda_{j_k})}}\sum_{r=1}^n\sum_{s=1}^n\mathbb{E}(\sigma_X(T_r-T_s))h_{i,k}(r,s)=
\frac{1}{2\pi n\sqrt{f_Y(\lambda_{j_i})f_Y(\lambda_{j_k})}}\sum_{r=1}^n\sum_{s=1}^n\sigma_Y(r-s)h_{i,k}(r,s)
$$
by \eqref{gammak2}, and therefore $\mathbb{E}(\Sigma_T)\to\Sigma$ by virtue of Theorem 5 of \cite{Hurvich} (the only condition required is second order stationarity of the process $Y_i$ and the behaviour (\ref{spectral-form}) of its spectral density). To complete the proof of (\ref{proba}), it will then suffice to show that
\begin{equation}\label{var}
\textrm{Var}\left(\Sigma_T\right)\to0,
\end{equation}
i.e. the variances of the entries of $\Sigma_T$ converge to zero. By Cauchy-Schwarz inequality, it will be enough to focus on the diagonals. We will treat those diagonals with cosine, as those with sine treat the same way. For some constant $C$ (that may change from one expression to another), we obtain
\begin{align}\label{varcond}
&\nonumber\var\left(
\frac{1}{2\pi nf_Y(\lambda_j)}\sum_{r=1}^n\sum_{s=1}^n\sigma_X(T_r-T_s)\cos(r\lambda_j)\cos(s\lambda_j)\right)\\
&\nonumber\sim\frac{C}{n^{2+4d}}\sum_{r,s,r',s'=1}^n\cov
\left(\sigma_X(T_r-T_s)\cos(r\lambda_j)\cos(s\lambda_j),\sigma_X(T_{r'}-T_{s'})\cos(r'\lambda_j)\cos(s'\lambda_j)\right)\\
&\le\frac{C}{n^{2+4d}}\left(\sum_{r=1}^n\sum_{s=1}^n\sqrt{\var\left(\sigma_X(T_r-T_s)\right)}\right)^2\le\frac{C}{n^{4d}}\left(\sum_{h=1}^n
\sqrt{\var\left(\sigma_X(T_h)\right)}\right)^2\le C\frac{n^{2d}}{n^{4d}}\to0,
\end{align}
using Corollary \ref{var-bound}.
\end{proof}

\section{Inference for the long-memory parameter}
\label{sec:inf}
We still assume in this section that $X= (X_t)_{t\in\mathbb{R}^+}$ is a stationary long memory zero-mean Gaussian process having a spectral density satisfying $H_f$ condition.
Periodogram-based approaches to estimate the long memory parameter $d$
are very popular. Often one requires that the underlying process is
linear or at least is built on martingale difference innovations. The
reader is referred to \cite{beran,Gir} for reviews of some
recent works on this issue, as well as the book edited by \cite{Doukhan}.
The next lemma and its proof show that although it is not a linear process with i.i.d. innovations, the sampled process still satisfies important long memory 4th cumulant conditions. These 4th cumulant conditions will allow us to show both the convergence of an estimator of the memory parameter $d$ and the estimation of the asymptotic variance, necessary for example in the inference about the mean of the original continuous time process $X$.

\begin{Lem}\label{lemme:maj_cumulant}
 Assume that $X$ is a zero mean stationary Gaussian process satisfying $H_f$ and let $Y= (X_{T_n})_{n\in\mathbb{N}}$, where $(T_n)_{n\in\mathbb{N}}$ is a Poisson process.
Then for all $d \in (0,1/2)$, we have 
\begin{equation}\label{double1}
\sup_{h\in \N}\sum_{r,s=0}^n \vert \cum(Y_0,Y_{h},Y_{r},Y_{s})\vert = O( n^{2d}).
\end{equation}
and
\begin{equation}\label{triple1}
\sum_{h,r,s=0}^n \vert \cum(Y_0,Y_{h},Y_{r},Y_{s})\vert = O(n^{4d}\log(n)).
\end{equation}
\end{Lem}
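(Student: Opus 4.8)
The plan is to exploit the conditional Gaussianity of $Y$ given the sampling times. Writing $a_1=k$, $a_2=k+h$, $a_3=k+r$, $a_4=k+s$, I would first note that, conditionally on $(T_n)$, the vector $(Y_{a_1},\dots,Y_{a_4})$ is centered Gaussian with $\E[Y_{a_i}Y_{a_j}\mid (T_n)]=\sigma_X(|T_{a_i}-T_{a_j}|)$. Applying Isserlis' formula to the conditional fourth moment, taking expectations, and subtracting the products of the unconditional covariances $\E[Y_{a_i}Y_{a_j}]=\E[\sigma_X(|T_{a_i}-T_{a_j}|)]$, all conditional cumulants of order three and four cancel and one is left with
\begin{equation}\label{cumdecomp}
\cum(Y_{a_1},Y_{a_2},Y_{a_3},Y_{a_4})=\sum_{\{i,j\},\{l,m\}}\cov\bigl(\sigma_X(|T_{a_i}-T_{a_j}|),\,\sigma_X(|T_{a_l}-T_{a_m}|)\bigr),
\end{equation}
the sum running over the three partitions of $\{1,2,3,4\}$ into two pairs. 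Since $T_{a}-T_{b}=\sum_{\ell=b+1}^{a}\Delta_\ell$ for $a>b$ is a sum of i.i.d.\ exponentials (Poisson sampling), each argument is a Gamma variable whose law depends only on the index gap; in particular the whole cumulant depends on $(h,r,s)$ alone, so $k$ is irrelevant and the $\sup_k$ is harmless.

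Next I would record the key cancellation. Ordering the four indices as $b_1\le b_2\le b_3\le b_4$, the increment blocks attached to the two pairs of a partition are index-intervals, and whenever these intervals are disjoint the two $\sigma_X$-values are \emph{independent}, so the corresponding covariance in (\ref{cumdecomp}) vanishes. The partition $\{b_1,b_2\},\{b_3,b_4\}$ always has disjoint blocks, so at most two terms survive: the ``interleaved'' pair $\{b_1,b_3\},\{b_2,b_4\}$ and the ``nested'' pair $\{b_1,b_4\},\{b_2,b_3\}$. This vanishing is essential rather than cosmetic: the term coming from the disjoint partition, were it present, would already overshoot the target bounds after summation.

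The heart of the argument is a sharp bound on the surviving covariances. For two increment sums $U,V$ I would split the underlying exponentials into three independent blocks, the shared block $W$ (the sum over the common indices, of shape $m_W$) and the private blocks $A,B$, so that $U=A+W$ and $V=B+W$. Since $A\perp B$ given $W$, conditioning on $W$ collapses the covariance to
\begin{equation}\label{shared}
\cov(\sigma_X(U),\sigma_X(V))=\cov\bigl(\alpha(W),\beta(W)\bigr),\qquad \alpha(w)=\E\,\sigma_X(A+w),\ \ \beta(w)=\E\,\sigma_X(B+w),
\end{equation}
and Cauchy--Schwarz reduces matters to estimating $\var\alpha(W)$ and $\var\beta(W)$. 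Using the representation $\sigma_X(x)=c(d)x^{2d-1}+g(x)$ of Lemma \ref{X-cov-density} together with the Gamma-moment asymptotics already exploited in the proof of Corollary \ref{var-bound}, I would show $\var\alpha(W)\le C\,m_U^{4d-4}m_W$ with $m_U=m_A+m_W$, and hence $|\cov(\sigma_X(U),\sigma_X(V))|\le C\,m_U^{2d-2}m_V^{2d-2}m_W$ (interpreted as $0$ when $m_W=0$). The gain is that conditioning on the shared block replaces each block's \emph{total} size by the \emph{shared} size $m_W$ in the fluctuation estimate; the crude bound $|\cov|\le C\,m_U^{d-1}m_V^{d-1}$ following directly from Corollary \ref{var-bound} discards this factor and is too weak for (\ref{triple1}). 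I expect the main difficulty to sit precisely here, in controlling $\var\alpha(W)$ uniformly over all block-size configurations: the derivative $\alpha'(w)=c(d)(2d-1)\E(A+w)^{2d-2}+\cdots$ involves the singular power $x^{2d-2}$, so the mean-value estimate of $\var\alpha(W)$ must be split according to whether the private block is empty. When $m_A=0$ one falls back on the sharp variance $\var\sigma_X(T_m)=O(m^{4d-3})$ contained in the proof of Corollary \ref{var-bound}; when $m_A\ge1$ the smoothing by $A$ keeps the argument away from the singularity; and the $O(x^{-1})$ remainder $g$ is treated as a lower-order contribution.

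Finally I would substitute this bound into the two sums. By translation invariance it suffices to take $b_1=0$, so the two surviving terms read $b_3^{2d-2}(b_4-b_2)^{2d-2}(b_3-b_2)$ and $b_4^{2d-2}(b_3-b_2)^{2d-1}$. For (\ref{triple1}) one sums over $0\le b_2\le b_3\le b_4\le n$; carrying out the summations in the order $b_4$, then $b_2$, then $b_3$ (using $\sum_{j\le n}j^{\gamma}=O(n^{1+\gamma})$ for $\gamma>-1$ and convergence for $\gamma<-1$, which holds since $2d-2<-1<2d-1<0$ for $d\in(0,1/2)$) gives $\sum b_3^{4d-1}=O(n^{4d})$ in each of the finitely many orderings. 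For (\ref{double1}) one index is frozen at $h$ and only two indices are summed, and the same estimates yield $O(n^{2d})$ uniformly in $h$, with room to spare. The bookkeeping over the orderings of the four indices is then routine once (\ref{shared}) and the variance estimate are in hand.
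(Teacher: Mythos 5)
Your opening reduction --- conditional Gaussianity plus Isserlis, leaving the sum over the three pair-partitions of $\cov\bigl(\sigma_X(T_{a_i}-T_{a_j}),\sigma_X(T_{a_l}-T_{a_m})\bigr)$ --- is exactly the paper's starting point (its displayed cumulant identity). From there you diverge. The paper bounds each surviving covariance by plain Cauchy--Schwarz, $\vert\cov\vert\le\bigl(\var\,\sigma_X(T_{m_U})\bigr)^{1/2}\bigl(\var\,\sigma_X(T_{m_V})\bigr)^{1/2}\le Cm_U^{d-1}m_V^{d-1}$ via Corollary \ref{var-bound}, and sums; this yields (\ref{double1}) but, as you correctly observe, only $O(n^{1+2d})$ for the triple sum --- and indeed the paper's written proof stops after the first inequality and never establishes (\ref{triple1}), even though (\ref{triple1}) is what is later invoked to bound $i_{n,2}(k)$. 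Your shared-block conditioning, which replaces the total block sizes by the shared size $m_W$ in the fluctuation estimate, is precisely the extra idea needed for (\ref{triple1}), your observation that the disjoint-block partition contributes zero is correct (the paper uses it only in the case $h<s<r$), and your final summation over the two surviving partitions is arithmetically right. So for (\ref{double1}) you do strictly more than necessary; for (\ref{triple1}) you supply an argument the paper omits.

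There is, however, one genuine gap: dismissing the remainder $g$ in $\sigma_X(x)=c(d)x^{2d-1}+g(x)$ as lower order. Lemma \ref{X-cov-density} gives only the pointwise bound $\vert g(x)\vert\le C(d)x^{-1}$, and a pointwise bound carries no information about the smoothness of $w\mapsto\E\,g(A+w)$, which is what your mean-value estimate of the variance needs. Without smoothness, the only available bound on the $g$--$g$ cross term is $\vert\cov(g(U),g(V))\vert\le\bigl(\E\,g(U)^2\bigr)^{1/2}\bigl(\E\,g(V)^2\bigr)^{1/2}\le Cm_U^{-1}m_V^{-1}$, with no gain from $m_W$; summed over the nested configuration $0\le b_2\le b_3\le b_4\le n$ this gives $\sum b_4^{-1}(b_3-b_2)^{-1}\asymp n\log^2 n$, which exceeds $n^{4d}$ for every $d\le 1/4$. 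The fix exists but must be made explicit: under $H_f$ one has $g(x)=-2c\int_0^\infty\cos(\lambda x)\lambda^{-2d}h(\lambda)\,d\lambda$, so that
\begin{equation*}
\Bigl\vert\tfrac{d}{dw}\E\,g(A+w)\Bigr\vert\le 2c\int_0^\infty\lambda^{1-2d}h(\lambda)\bigl(1+\lambda^2\bigr)^{-m_A/2}d\lambda,
\end{equation*}
which does decay in the private block size and restores the $m_W$ factor. The same spectral representation is also the cleanest way to obtain the uniform bound $\var\bigl(\E[\sigma_X(A+w)]_{w=W}\bigr)\le Cm_U^{4d-4}m_W$ over all block configurations, including $m_A\in\{0,1\}$ where $\E(A+w)^{2d-2}$ blows up as $w\to0$; you correctly flag this as the main difficulty but leave it open, and it should not be waved through, since the whole improvement over the paper's Cauchy--Schwarz bound lives in exactly this estimate.
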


\begin{proof}
The proof is postponed in Appendix. Note that the term $\log(n)$ in the right hand side of \eqref{triple1} is needed only in the particular case $d=1/4$, known to be borderline between weak long memory and strong long memory, as will be seen in the proof. 
\end{proof}

\subsection{Consistency of Local Whittle estimator}
We consider local Whittle estimator of the memory parameter $d$ defined by
\[\widehat{d}_n=\underset{\beta\in[-1/2,1/2]}{\operatorname{argmin}}U_n(\beta) \]
where the contrast function $U_n$ is defined by
\[U_n(\beta)=\log\left(\frac{1}{m_n}\sum_{j=1}^{m_n}\lambda_j^{2\beta}I_n(\lambda_j)\right)-\frac{2\beta}{m_n}\sum_{j=1}^{m_n}\log\lambda_j,\]
 and the bandwidth parameter $m=m_n$ satisfies $m_n\to \infty$ and $m_n=o(n)$.

\begin{thm}\label{dhat}
Suppose ${X}$ is a stationary Gaussian process satisfying condition $H_f$ and that $Y_n=X_{T_n}$, where $T_n$ is a Poisson process with rate 1.
Then,
\begin{equation}\label{dalla1}
\widehat{d}_n\xrightarrow[n\to \infty]{P}d.
\end{equation}
In addition, for $m_n=n^a$, $0<a<1$, we have
\begin{equation}\label{dalla2}
\widehat{d}_n-d=o_P\left(\frac{1}{\log n}\right).
\end{equation}
\end{thm}

\begin{proof}
\noindent 
According to our result (\ref{dala}) and \cite{dalla} (Corollary 1), we have
$$
\hat d-d=O_P\left(m^{-1/2}\log m+\left(\frac{m}{n}\right)^{2d}+r_n\right),
$$
for some remainder $r_n$, which we will be controlling as in what follows, depending on the convergence rate of $m/n$ to zero. To prove (\ref{dalla1}) it will suffice to show that $r_n\to0$.
\paragraph{Case 1 : $\sqrt{n}(\log n)^{4/(1-2d)}=O(m)$. }~
From part (iv) of Corollary 1 of \cite{dalla}, the remainder $r_n$ can be written as 
$$
r_n=\left(\frac{D_n^{**}}{n}\right)^{1/2}\left(\frac{n}{m}\right)^{1-2d}\log^3 n\to0,
$$
where
$$
D_n^{**}=\sup_{h,r\in\N}\sum_{s=1}^n\vert \cum(Y_0,Y_{h},Y_{r},Y_{s})\vert.
$$
We have 
$$
D_n^{**}
\le
\sup_{h\in \N}\sum_{r,s=0}^n \vert \cum(Y_0,Y_{h},Y_{r},Y_{s})\vert,
$$
so that by (\ref{double1}) we get $D_n^{**}=O(n^{2d})$ and hence $r_n=O(1/\log n)$.
\paragraph{Case 2 : $m=O(\sqrt{n}(\log n)^{4/(1-2d)})$. }~ 
We use (iii) of Corollary 1 of \cite{dalla}, 
$$
r_n=\left(\frac{D_n^*}{n}\right)^{1/2}\left(\frac{m}{n}\right)^{2d}\log^2 n\to0,
$$
where
$$
D_n^*=\sum_{h,r,s=0}^n \vert \cum(Y_0,Y_{h},Y_{r},Y_{s})\vert.
$$
According to (\ref{triple1}), $D_n^*=O(n^{4d}\log n)$, and therefore 
$$
r_n=O\left(n^{d-1/2}\left(\log n\right)^{2(1+2d)/(1-2d)}\right)\to0.
$$
This concludes proof of (\ref{dalla1}). \\
To prove (\ref{dalla2}), we show that $r_n=o(1/\log n)$. This is immediate in case (2) above. 
Since $m=n^a$, $0<a<1$, we will be in case (1) if $a>1/2$ and then
$$
r_n=O\left(n^{1/2-d-a(1-2d)}\right)/\log n=o(1/\log n).
$$
\end{proof}

\subsection{Long run variance}

The 4th cumulant condition \eqref{double1} is needed to estimate the long run variance of the sampled process. Such estimation plays a crucial role in many aspects of statistical inference. For example, when it comes to estimating the mean $\mu$ of the original process $X$, as we have from \cite{Phi2}
$$
\left(\textrm{Var}\left(n^{1/2-d}\bar Y_n\right)\right)^{-1/2}n^{1/2-d}(\bar Y_n-\mu)\overset{\mathcal{D}}{\to}\mathcal{N}(0,1)
$$
and hence, it is important to obtain a consistent estimator of the variance above.
Also such estimator is important in testing for short memory versus
long memory or for stationarity versus unit root as such tests involve
V/S type statistics and require estimating the long run
variance (see \cite{giraitis3} and references therein for details). Let us write the spectral density of
 $Y_i$ under the form $f(\lambda)\sim c\vert\lambda\vert^{-2d}$ as
 $\lambda\to 0$. Let
$$
\hat\sigma(h)=\frac{1}{n}\sum_{j=1}^{n-h}(Y_j-\bar Y)(Y_{j+h}-\bar Y)
$$
be the sample covariance function of $Y_i$. Let the asymptotic variance of the normalized sum be
$$
S^2(d) =\underset{n\to\infty}{\lim}\left(\textrm{Var}(n^{1/2-d}\bar Y)\right)=4c\int_{-\infty}^\infty\left(\frac{\sin(\lambda/2)}{\lambda}\right)^2\vert\lambda\vert^{-2d}d\lambda.
$$
Let
$$
\widehat{S}^2(d)=q^{-2d}\left(\hat\sigma(0)+2\sum_{h=1}^q\left(1-\frac{h}{q}\right)\hat\sigma(h)\right).
$$
\begin{prop}
Let $\hat d$ be a consistent estimator for memory parameter $d$ such that $\log(n)(\hat d-d)=o_P(1)$. Let $q\to\infty$ as $n\to\infty$ such that $q=O(\sqrt{n})$.
Then we have
$$
\widehat S^2(\hat d)\overset{P}{\to}S^2(d).
$$
\end{prop}
\begin{proof}
Referring to Theorem 2.2. of \cite{abadir} we just need to verify the cumulant condition
$$
\underset{h}{\sup}\sum_{r,s=1}^n\left\vert\textrm{Cum}(Y_0,Y_h,Y_r,Y_s)\right\vert\le \tilde{c}n^{2d},
$$ for some positive constant $\tilde{c}$.  This is the case according to 
Lemma \ref{lemme:maj_cumulant}.
\end{proof}
\begin{remm}
 A readily available candidate for $\hat d$ above is the Whittle
 estimator for which the $\log(n)$ consistency was established in
 Theorem \ref{dhat}.
\end{remm}

\section*{Appendix : Proof of Lemma \ref{lemme:maj_cumulant} }
\begin{proof}

The proof is essentially based on Corollary \ref{var-bound} and a well known cumulant formula. \\ 
Without loss of generality, we can assume that the Poisson rate is 1.
The process $Y$ is 4th order stationary as the conditional joint distribution of $(Y_k,Y_{k+h},Y_{k+r},Y_{k+s})$ given $(T_1,\dots,T_{k+\max(h,r,s)})$ is a multivariate normal with variance-covariance matrix $M(T_k,T_{k+h},T_{k+r},T_{k+s})$ given by
\begin{equation}
\label{cov-stat}
M(T_k,T_{k+h},T_{k+r},T_{k+s}):=\left( \begin{smallmatrix}
\sigma_X(0)&\sigma_X(T_{k+h}-T_k)&\sigma_X(T_{k+r}-T_k)&\sigma_X(T_{k+s}-T_k)\\
\sigma_X(T_{k+h}-T_k)&\sigma_X(0)&\sigma_X(T_{k+r}-T_{k+h})&\sigma_X(T_{k+s}-T_{k+h})\\
\sigma_X(T_{k+r}-T_k)&\sigma_X(T_{k+r}-T_{k+h})&\sigma_X(0)&\sigma_X(T_{k+s}-T_{k+r})\\
\sigma_X(T_{k+s}-T_k)&\sigma_X(T_{k+s}-T_{k+h})&\sigma_X(T_{k+s}-T_{k+r})&\sigma_X(0)
		\end{smallmatrix}\right) 
		\end{equation}
which is $k$ free. Hence it is enough to establish the lemma when $k=0$.
We apply the total law of cumulance formula, (\cite{Brillinger}), which for the sake of clarity, we remind here: for all random vectors $Z=(Z_1,\ldots,Z_n)'$ and $W$, we have
\begin{equation}\label{brillinger}
\cum(Z)=\sum_{\pi}\cum\left[\cum(X_{\pi_1}\vert W),\ldots,\cum(X_{\pi_b}\vert W)\right]
\end{equation}
where $X_{\pi_j}=(X_i,i\in\pi_j)$, and $\pi_1,\ldots,\pi_b$, ($b=1,\ldots,n$) are the blocks of the permutation $\pi$, and the sum is over all permutations $\pi$ of the set $\{1,2,\ldots,n\}$.\\
But condition on $T$, the process $Y_t$ is jointly zero-mean Gaussian and therefore $\mathbb{E}(Y_t\vert T)=0$ as well as $\cum(Y_i,Y_j,Y_k,Y_\ell\vert T)=\cum(Y_i,Y_j,Y_k\vert T)=0$ for all $i,j,k,\ell$. Hence applying (\ref{brillinger}) to $Y_t$ with $W=T$, only the two-by-two partitions of $\{0,h,r,s\}$ will survive. and since $\cum(U,V)=\textrm{Cov}(U,V)$, we get from (\ref{cov-stat})

\begin{align}\label{cum1}
\cum(Y_0,Y_{h},Y_{r},Y_{s})=&\cov(\sigma_X(T_h),\sigma_X(T_r-T_s))+\cov(\sigma_X(T_r),\sigma_X(T_h-T_s))\nonumber\\
&+\cov(\sigma_X(T_s),\sigma_X(T_r-T_h)).
\end{align}
Note that for $ h< \min(r,s)$, $\cov(\sigma_X(T_h),\sigma_X(T_r-T_s))=0$. 
Moreover 
\begin{align*}
\sum_{1\leq r \leq h \leq s \leq n} |\cov(\sigma_X(T_h),\sigma_X(T_r-T_s)) | 
& \leq \sum_{1\leq r \leq h \leq s \leq n} \var(\sigma_X(T_h))^{1/2} \var( \sigma_X(T_s-T_r))^{1/2}\\ 
& \leq \sum_{1\leq r \leq h \leq s \leq n} h^{-\alpha/2} (1 +s-r)^{-\alpha/2} \\
& \leq h^{-\alpha/2} \sum_{1\leq r \leq h } \sum_{t=1}^n t ^{-\alpha/2} \\
& \leq h^{1-\alpha/2} \begin{cases} n^{1-\alpha/2} = n^{2d -1/2 }& \text{if $d<1/4$} \\ 
 \log(n) & \text{if $d\geq 1/4$} \\ 
\end{cases}\\
& \leq \begin{cases} n^{4d -1 }& \text{if $d<1/4$} \\ 
 \log(n) & \text{if $d\geq 1/4$} \\ 
\end{cases}\\
& \leq Cn^{2d} \; \text{ for all } \; 0< d <1/2. 
\end{align*}
The last configuration is 
\begin{align*}
\sum_{r,s=1}^h | \cov(\sigma_X(T_h),\sigma_X(T_r-T_s))| 
& = \sum_{r,s=1}^h | \var(\sigma_X(T_h))^{1/2} \var( \sigma_X(T_s-T_r))^{1/2}\\ 
& \leq h^{-\alpha/2} \sum_{t=1}^h (h-t) t^{-\alpha/2} \\ 
& \leq h^{-\alpha/2 } \begin{cases} C h^{1-\alpha/2} = Ch^{2d -1/2 }& \text{if $d<1/4$} \\ 
 \log(h) & \text{if $d\geq 1/4$} \\ 
\end{cases} \\ 
& \leq Cn^{2d} \; \text{ for all } \; 0< d <1/2. 
\end{align*}
Therefore uniformly in $h$ we have 
\begin{align*}
\sum_{r,s=1}^n | \cov(\sigma_X(T_h),\sigma_X(T_r-T_s))| \leq Cn^{2d}.
\end{align*}

For the remaining two terms in the right hand side of \eqref{cum1} we have, for fixed $h$, 
\begin{align*}
\sum_{r,s=1}^n | \cov(\sigma_X(T_r),\sigma_X(T_h-T_s))| 
& = \sum_{r,s=1}^n | \cov(\sigma_X(T_s),\sigma_X(T_h-T_r))| \\ 
& \leq \sum_{r,s=1}^n \var(\sigma_X(T_s))^{1/2} \var( \sigma_X(T_h-T_r))^{1/2}\\ 
& \leq \sum_{r,s=1}^n s^{-\alpha/2} (1+|h-r|)^{-\alpha/2} \\ 
& \leq C \begin{cases} n^{2-\alpha} = n^{4d -1 }& \text{if $d<1/4$} \\ 
 \log(n)^2 & \text{if $d\geq 1/4$} \\ 
\end{cases} \\ & \leq Cn^{2d} \; \text{ for all } \; 0< d <1/2. 
\end{align*}

This concludes the proof of (\ref{double1}). 

Let us now prove \eqref{triple1}. Note that 
\begin{align}
\sum_{h,r,s=0}^n \cum(Y_0,Y_{h},Y_{r},Y_{s}) 
& = 3 \sum_{h,r,s=1}^n \cov(\sigma_X(T_h),\sigma_X(T_r-T_s)) \nonumber\\ 
& = 6 \sum_{h=1}^n\sum_{r<s=1}^n \cov(\sigma_X(T_h),\sigma_X(T_r-T_s)). \label{logn}
\end{align}
Moreover, we have 
\begin{align*}
\sum_{h,r,s=1}^n |\cov(\sigma_X(T_h),\sigma_X(T_r-T_s)) | &\leq C
\sum_{h,r,s=1}^n h^{-\alpha/2} (1+|r-s|)^{-\alpha/2} \\
&\leq C
\sum_{h=1}^n h^{-\alpha/2} \sum_{t=1}^n (n-t) t^{-\alpha/2} \\
&\leq C \begin{cases} n n^{2-\alpha} = n^{4d }& \text{if $d<1/4$} \\ 
n \log(n)^2 & \text{if $d> 1/4$} \\ 
\end{cases} \\ 
& \leq Cn^{4d}
\end{align*}
In the particular case $d=1/4$ (where we still have $\alpha=2$), a supplementary term $\log(n)$ is needed in the bound. Indeed 
we split the sum in the right hand side of (\ref{logn}) into 3 configurations. when $1 \leq h \leq r< s \leq n$ the covariance $ \cov(\sigma_X(T_h),\sigma_X(T_r-T_s))$ is zero. When the sum is over $1 \leq r< h \leq s \leq n$, 
we get 
\begin{align*}
 \sum_{1 \leq r< h \leq s \leq n} |\cov(\sigma_X(T_h),\sigma_X(T_r-T_s)) | & \leq C
 \sum_{s=1}^n \sum_{h=1}^s h^{-1 } \sum_{r=1}^{h-1} (s-h + h-r)^{-1} \\
 & \sim C
 \sum_{s=1}^n \sum_{h=1}^s h^{-1 } \left( \log(s) - \log(s-h) \right)\\
 & = -C \sum_{s=1}^n \sum_{h=1}^s (h/s)^{-1 } \log(1-h/s) (1/s) \\
 & \sim -C \sum_{s=1}^n\left(\int_{0}^1 \frac{ \log(1-x)}{x} \, dx\right) = C\frac{\pi^2}{6} n. 
\end{align*}
For the last sum over $1 \leq r < s \leq h \leq n$ (where we will need the $\log$ term) we have 
\begin{align*}
 \sum_{1 \leq r <s \leq h \leq n} |\cov(\sigma_X(T_h),\sigma_X(T_r-T_s)) | & \leq C
 \sum_{h=1}^n h^{-1 } \sum_{s=1}^h \sum_{r=1}^{s-1} (s-r)^{-1} \\
 & = \sum_{h=1}^n h^{-1 } \sum_{t=1}^h (h-t) t^{-1} \\ 
 & = \sum_{h=1}^n \sum_{t=1}^h (1-t/h) t^{-1} \\
 & \sim C \sum_{h=1}^n ( \log(h) -1 ) \sim C n \log(n). 
\end{align*}

This completes the proof of (\ref{triple1}) in Lemma \ref{lemme:maj_cumulant}.
\end{proof}

\end{document}